\theoremstyle{thmstyleone}
\newtheorem{theorem}{Theorem}
\newtheorem{lemma}{Lemma}
\theoremstyle{thmstyletwo}
\theoremstyle{thmstylethree}
\begin{document}

\title{Boundary smoothness conditions for functions in $R^p(X)$}

\author{Stephen Deterding \thanks{Email: deterding@marshall.edu} \\ Department of Mathematics \& Physics \\ Marshall University, Huntington, WV, USA }

\date{}



\maketitle

\begin{abstract}
    Let $X$ be a compact subset of the complex plane and let $R^p(X)$, $2< p < \infty$, denote the closure of the rational functions with poles off $X$ in the $L^p$ norm. In this paper we consider three conditions that show how the functions in $R^p(X)$ can have a greater degree of smoothness at the boundary of $X$ than might otherwise be expected. We will show that two of the conditions are equivalent and imply the third but the third does not imply the other two.
\end{abstract}

\section{Introduction}

Let $X$ be a compact subset of the complex plane and let $R_0(X)$ denote the set of all rational functions with poles off $X$. In this paper we consider the function space $R^p(X)$, $2 < p < \infty$, the closure of $R_0(X)$ in the $L^p$ norm, and the smoothness of the functions in this space. Many results in this area have been determined for $R(X)$, the closure of $R_0(X)$ in the uniform norm, but for $R^p(X)$ less is known. In particular, Wang proposed three conditions that demonstrate how the functions in $R(X)$ can have a greater degree of smoothness at the boundary of $X$ than might otherwise be expected. \cite{Wang2} To describe these conditions, we first need to state a few definitions.

\bigskip

We say that $\phi(r)$ is an admissible function if it is positive and nondecreasing on $(0, \infty)$ and if the associated function $\psi(r) = \frac{r}{\phi(r)}$ is also positive and nondecreasing on $(0, \infty)$ with $\psi(0^+) =0$. Examples of admissible functions are the power functions $\phi(r) = r^{\alpha}$ where $0<\alpha <1$ and the function $\phi(r) = \frac{r}{\log(1+r)}$. Let $\gamma(X)$ denote the analytic capacity of the set $X$ \cite[pg. 196]{Gamelin} and let $R_x^t f(z)$ be the error at $z$ of the $t$-th degree Taylor polynomial of $f$ about $x$, which is defined by

\begin{equation*}
    R_x^t f(z) = f(z) - \sum_{j=0}^t \dfrac{f^{(j)}(x)}{j!} (z-x)^j.
\end{equation*}

\bigskip \noindent Let $A_n(x)$ denote the annulus $\{z: 2^{-(n+1)} < \vert z-x\vert  < 2^{-n}\}$ and let $B(x,r)$ denote the disk centered at $x$ with radius $r$. Let $m$ denote area measure. A set $E$ is said to have full area density at $x$ if $\dfrac{m(B(x,r) \setminus E)}{m(B(x,r))} \to 0$ as $r \to 0$. 

\bigskip

Let $X$ be a compact subset of the plane, let $x \in X$, let $t$ be a non-negative integer, let $\phi(r)$ be an admissible function, and let $\Vert\cdot\Vert_X$ denote the uniform norm on $X$. Wang's three conditions are the following.

\begin{enumerate}[(A)]
    \item For each $\epsilon >0 $ the set 
    
    \begin{align*}
   & \{y\in X: \vert R_x^t f(y) \vert \leq \epsilon \phi(\vert y-x \vert)\vert y-x \vert^t \Vert f \Vert_X \\ &\text{ for all }  f \in R_0(X)\}
    \end{align*}
    
    \bigskip \noindent has full area density at $x$.
    
    \item There exists a representing measure $\mu$ for $x$ on $R(X)$ such that $\mu(x) = 0$ and 
    
    \begin{equation*} 
    \int \dfrac{d \vert \mu \vert(z)}{\vert z-x \vert^t\phi(\vert z-x \vert)} < \infty.
    \end{equation*}
    
    \item The series 
    
    \begin{equation*}
        \sum_{n=1}^{\infty} \dfrac{2^{n(t+1)}\gamma(A_n(x) \setminus X)}{\phi(2^{-n})} 
    \end{equation*}
    
    \bigskip \noindent converges.
    
\end{enumerate}

\noindent The relationship between these conditions has been studied by Wang and O'Farrell. In various papers \cite{O'Farrell, Wang, Wang2} they showed that (B) and (C) are equivalent and imply (A), but (A) does not imply (B) or (C).

\bigskip

We now consider Wang's conditions applied to the space $R^p(X)$. Before we state what these conditions look like in the context of $R^p(X)$, we introduce $q$-capacity, which is the appropriate capacity for $R^p(X)$. For $1 < q < 2$, the   $q$-capacity of a compact set $X$ in the complex plane is denoted $\Gamma_q(X)$ and is defined by 

\begin{equation*}
\Gamma_q(X) = \inf \int \vert \nabla u\vert ^q dm,
\end{equation*}

\bigskip
\noindent
where the infimum is taken over all infinitely differentiable functions $u$ of compact support with $u \equiv 1$ on $X$.

\bigskip

Let $X$ be a compact subset of the plane, let $x \in X$, let $t$ be a non-negative integer, let $2 < p < \infty$ and $q = \frac{p}{p-1}$, and let $\phi(r)$ be an admissible function. Wang's three conditions restated for $R^p(X)$ are 

\begin{enumerate}[(A$'$)]
    \item For each $\epsilon >0 $ the set 
    
    \begin{align*}
   & \{y\in X: \vert R_x^t f(y)\vert  \leq \epsilon \phi(\vert y-x\vert )\vert y-x\vert ^t \Vert f\Vert _{L^p(X)}\\ &  \text{ for all } f \in R_0(X)\}
    \end{align*}
    
    \bigskip \noindent has full area density at $x$.
    
    \item There exists $g \in L^q(X)$ which represents $x$ on $R^p(X)$ such that 
    
    \begin{equation*}
        \int \dfrac{\vert g(z)\vert ^q dm}{\vert z-x\vert ^{qt}\phi(\vert z-x\vert )^q} < \infty.
    \end{equation*}
    
    \item The series 
    
    \begin{equation*}
        \sum_{n=1}^{\infty} \dfrac{2^{n(t+1)q}\Gamma_q(A_n(x) \setminus X)}{\phi(2^{-n})^q} 
    \end{equation*}
    
    \bigskip \noindent converges.
    
\end{enumerate}

\bigskip \noindent In analogy with $R(X)$, we expect that (B$'$) and (C$'$) are equivalent, and imply (A$'$), but (A$'$) does not imply (B$'$) or (C$'$). What is known about the relationship of these conditions is the following. Wolf has shown that (B$'$) implies (A$'$) \cite[ Part 1 Theorem 4.1]{Wolf} and (C$'$) \cite[ Part 2 Theorem 1.1]{Wolf} but nothing else is known. We will show that (C$'$) implies (B$'$) but (A$'$) does not imply (C$'$), and thus (A$'$) does not imply (B$'$). This shows that Wang's conditions restated for $R^p(X)$ have the same relations amongst themselves as Wang's conditions for $R(X)$.

\section{The equivalence of (B$'$) and (C$'$)}

We will first show that (C$'$) implies (B$'$). In particular, we will prove this for arbitrary non-decreasing $\phi$ which allows us to consider the case $t=0$ without loss of generality.

\begin{theorem}
Let $\phi(r)$ be a positive non-decreasing function and let $X$ be a compact subset of the plane and let $x \in X$. Let $2 < p < \infty$ and $q = \frac{p}{p-1} $, and suppose

\begin{equation*}
    \sum_{n=1}^{\infty} 2^{nq}\phi(2^{-n})^{-q}\Gamma_q(A_n(x) \setminus X) < \infty.
\end{equation*}

Then there exists $g \in L^q(X)$ which represents $x$ on $R^p(X)$ such that 

\begin{equation*}
    \int \dfrac{\vert g(z)\vert ^q dm }{\phi(\vert z-x\vert )^q} < \infty.
\end{equation*}

\end{theorem}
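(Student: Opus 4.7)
The approach is to build $g$ via the Cauchy--Green (Pompeiu) formula, using near-extremal test functions for the $q$-capacities $\Gamma_q(A_n(x)\setminus X)$ as the building blocks. By the definition of $\Gamma_q$, for each $n\geq 1$ I pick $u_n\in C_c^\infty(\mathbb{C})$ with $0\leq u_n\leq 1$, with $u_n\equiv 1$ on an open neighborhood of $A_n(x)\setminus X$, with $\mathrm{supp}(u_n)$ contained in the slightly enlarged annulus $\{2^{-(n+2)}<|z-x|<2^{-(n-1)}\}$, and with $\int|\nabla u_n|^q\,dA \leq 2\Gamma_q(A_n(x)\setminus X)$. Since these supports have bounded overlap in $n$, the sum $V := \sum_n u_n$ is a locally finite smooth function that vanishes in a neighborhood of $x$.

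The candidate for $g$ is
\[
g(z) = -\frac{1}{\pi}\cdot\frac{\bar\partial U(z)}{z-x},
\]
for a suitable smooth, compactly supported cutoff $U$ with $U(x)=1$ and $\mathrm{supp}(\bar\partial U)\subset X$, assembled from $V$ and an outer bump $\eta$. Granting such $U$, the Pompeiu formula applied to $fU$ gives, for a rational $f$ with poles off $X$ analytic on $\mathrm{supp}(U)$,
\[
f(x) = (fU)(x) = -\frac{1}{\pi}\int \frac{f\,\bar\partial U}{z-x}\,dA = \int f\,g\,dA,
\]
using $\bar\partial f \equiv 0$ on $\mathrm{supp}(U)$. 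Since $\mathrm{supp}(g)\subset X$, this identity becomes $\int_X fg\,dA = f(x)$, exhibiting $g$ as a representing function. The weighted $L^q$ estimate reduces to a per-annulus computation: on $A_n(x)$, $\bar\partial U$ agrees with $-\bar\partial u_n$ up to bounded overlap, $|z-x|\asymp 2^{-n}$, and $\phi(|z-x|)\asymp \phi(2^{-n})$ by monotonicity of $\phi$, so
\[
\int_{A_n(x)}\frac{|g|^q}{\phi(|z-x|)^q}\,dA \;\leq\; C\,\frac{2^{nq}}{\phi(2^{-n})^q}\int|\nabla u_n|^q\,dA \;\leq\; 2C\,\frac{2^{nq}\Gamma_q(A_n(x)\setminus X)}{\phi(2^{-n})^q},
\]
which is summable by hypothesis; the contribution of $\bar\partial\eta$ (bounded and supported at a fixed positive distance from $x$) is trivially finite. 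Finally $g(x)=0$ because $\bar\partial U$ vanishes in a neighborhood of $x$.

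The main technical obstacle is producing the cutoff $U$ with $\mathrm{supp}(\bar\partial U)\subset X$. The naive choice $U=\eta(1-V)$ has $\bar\partial U$ consisting of an inner piece supported in the transition zones of the $u_n$'s and an outer piece from $\bar\partial\eta$, neither of which automatically sits in $X$; and one cannot simply drop the outer piece, since a direct computation shows it contributes a full $f(x)$ to the global Pompeiu identity, so its omission collapses $\int_X fg\,dA$ to $0$. The remedy is to tighten the construction: one must choose the open sets on which $u_n\equiv 1$ by a Whitney-type selection ``flush'' with $X$ so that each $\mathrm{supp}(\bar\partial u_n)$ sits in $X$, and one must absorb the outer contribution by modifying $g$ modulo the annihilator $(R^p(X))^\perp \subset L^q(X)$---an adjustment that leaves the representing property intact while preserving the weighted $L^q$ bound. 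Once $g\in L^q(X)$ is produced, the representing identity extends from the dense class of $f$ analytic on a fixed neighborhood of $X$ to all of $R^p(X)$ by H\"older's inequality.
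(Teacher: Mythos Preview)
Your overall strategy --- decompose $f(x)$ via Cauchy over dyadic annuli, feed in Hedberg near-extremals for $\Gamma_q(A_n\setminus X)$, convert to area integrals by Green, and estimate annulus by annulus --- is exactly the skeleton of the paper's proof. The place where your plan parts company with a working argument is precisely the ``main technical obstacle'' you flag, and neither of your proposed remedies closes it.

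Remedy (a) cannot be made to work as stated. Any smooth $u_n$ that is $\equiv 1$ on a neighborhood of $A_n\setminus X$ and is compactly supported in the enlarged annulus must transition from $1$ to $0$ across the radial cutoff; that transition lives on circles $|z-x|\approx 2^{-(n-1)}$ and $2^{-(n+2)}$, which have no reason to lie in $X$. Forcing the entire transition zone into $X$ is not a matter of Whitney bookkeeping --- it can push $\int|\nabla u_n|^q$ far beyond $\Gamma_q(A_n\setminus X)$ (or be outright impossible if $X$ is thin there). Remedy (b) is not well-posed: your candidate $g$ is not an element of $L^q(X)$, so ``adjusting modulo $(R^p(X))^\perp\subset L^q(X)$'' has no meaning, and even a correctly formulated projection would not automatically respect the \emph{weighted} $L^q$ bound you need.

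The paper resolves this not by trying to write $g$ explicitly, but by making the auxiliary data $f$-dependent and then invoking Hahn--Banach annulus by annulus. For a fixed rational $f$ (modified off $X$ to be continuous) one chooses compact $K_n\subset A_n\setminus X$ with smooth boundary so that $f$ is analytic outside $\bigcup_n K_n$ and $\int_{A_n\setminus K_n}|f|^p\,dA\le 2\int_{X\cap A_n}|f|^p\,dA$. With $P=\sup_m P_m$ (Hedberg's functions, $P_m\equiv 1$ on $K_m$), Green's theorem over $A_n\setminus K_n$ gives
\[
T_n(f):=\int_{\partial A_n}\frac{f}{z}\,dz=\int_{A_n\setminus K_n}\frac{f}{z}\,\bar\partial P\,dA,
\]
and H\"older bounds this by $C\,2^n\|f\|_{L^p(X\cap A_n)}\bigl[\Gamma_q(A_{n-1}\setminus X)+\Gamma_q(A_n\setminus X)+\Gamma_q(A_{n+1}\setminus X)+4^{-n}\phi(2^{-n})^q\bigr]^{1/q}$. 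The bound no longer sees $K_n$, so $T_n$ is a bounded linear functional on $R^p(X\cap A_n)$; Hahn--Banach then produces $g_n\in L^q(X)$ supported in $A_n$ with $T_n(f)=\int fg_n\,dA$ and with $\|g_n\|_q^q$ controlled by the bracket above. Setting $g=\tfrac{1}{2\pi i}\sum_n g_n$ and summing the Cauchy contributions gives the representing property, and the weighted integral $\int|g|^q\phi(|z-x|)^{-q}\,dA$ is finite because on $A_m$ only $g_m$ contributes and $\phi(|z-x|)\asymp\phi(2^{-m})$.

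The moral: the Hahn--Banach step is not cosmetic but essential --- it is what converts an $f$-dependent estimate (where the integration region $A_n\setminus K_n$ is tailored to $f$) into an $f$-independent $g_n$ supported in $X$, with a norm bound that plugs directly into the weighted sum. Your attempt to keep $g$ explicit forces you to fight the support condition head-on, and that fight cannot be won at the level of the test functions $u_n$ alone.
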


\begin{proof}

We may assume that $X$ is a subset of the unit disk and $x=0$ without loss of generality. We abbreviate $A_n(x)$ to $A_n$.  

\bigskip

Let $T_n$ be a linear functional defined by $ T_n(f) =  \int_{\partial A_n} \frac{f(z)}{z}dz $ and let $f$ be a rational function with poles off $X$. We can assume that $f$ is modified off $X$ so that it is continuous, but still analytic in a neighborhood of $X$. Then there exist closed sets $K_n \subseteq A_n \setminus X$ with smooth boundaries such that $f$ is analytic outside of $\cup K_n$ and 

\begin{equation*}
    \int_{A_n \setminus  K_n} \vert f\vert ^p dm \leq 2 \int_{X \cap A_n} \vert f\vert ^p dm.
\end{equation*}

\bigskip We now bound $T_n(f)$. In \cite{Hedberg}, Hedberg constructed smooth functions $P_m$ such that each $P_m=1$ on $ K_m$, has support on $A_{m-1} \cup A_m \cup A_{m+1}$, and $\int \vert \nabla P_m\vert ^q dm \leq C (\Gamma_q(A_m \setminus X) + 4^{-m})$. (Note that in \cite{Hedberg} these functions are called $\phi_m$.) By modifying this construction, we can make it so that $\int \vert \nabla P_m\vert ^q dm \leq C (\Gamma_q(A_m \setminus X) + 4^{-m}\phi(2^{-m})^{q})$. Let $P = \sup_m P_m$ so that $P(z) = 1$ on $\cup K_m$. Then it follows from Green's Theorem and the analyticity of $f$ that

\begin{align*}
    \sum_{m=n}^{\infty} \int_{\partial A_m} \dfrac{f(z)}{z}dz &= \sum_{m=n}^{\infty} \int_{\partial K_m} \dfrac{f(z)}{z}dz\\ 
    &=  \sum_{m=n}^{\infty} \int_{\partial K_m} \dfrac{f(z) P(z)}{z}dz\\
     &=  \sum_{m=n}^{\infty} \int_{A_m \setminus K_m} \dfrac{f(z)}{z} \dfrac{\partial P(z)}{\partial \overline{z}}dm,
\end{align*}

\bigskip \noindent and thus 

\begin{equation*}
    T_n(f) = \sum_{m=n}^{\infty} \int_{\partial A_m} \dfrac{f(z)}{z}dz - \sum_{m=n+1}^{\infty} \int_{\partial A_m} \dfrac{f(z)}{z}dz = \int_{A_n \setminus K_n} \dfrac{f(z)}{z} \dfrac{\partial P(z)}{\partial \overline{z}}dm.
\end{equation*}

\bigskip

\noindent Hence it follows from Holder's inequality that

\begin{align*}
    \vert T_n(f)\vert  &\leq  2^{n+1}  \left(\int_{A_n \setminus K_n} \vert f(z)\vert ^p dm\right)^{\frac{1}{p}}  \left(\int_{A_n \setminus K_n} \vert \nabla P\vert ^q dm\right)^{\frac{1}{q}}\\
    &\leq C 2^n \Vert f\Vert _{L_p(X \cap A_n)} \left [\Gamma_q(A_{n-1} \setminus X) + \Gamma_q(A_n \setminus X) + \Gamma_q(A_{n+1} \setminus X) + 4^{-n}\phi(2^{-n})^q \right]^{\frac{1}{q}}.
\end{align*}

\bigskip \noindent Thus $T_n(f)$ is a bounded linear functional on $R^p(X \cap  A_n  )$. Thus it follows from the Hahn-Banach theorem that there exists $g_n \in L^q(X)$ with support on $ A_n  $ such that $T_n(f) = \int fg_n dm$ for all $f$ in $R^p(X \cap  A_n )$ and 

\begin{equation*} 
\int \vert g_n\vert ^q dm \leq C 2^{nq} \left[ \Gamma_q(A_{n-1} \setminus X) + \Gamma_q(A_n \setminus X) + \Gamma_q(A_{n+1} \setminus X) + 4^{-n}\phi(2^{-n})^q \right].
\end{equation*}

\bigskip

Now let $\displaystyle g = \dfrac{1}{2\pi i}\sum_{n=1}^{\infty} g_n$. Then by the Cauchy integral formula,

\begin{align*}
    f(0)  =&\dfrac{1}{2\pi i} \sum_{n=1}^{\infty} \int_{\partial A_n} \dfrac{f(z)}{z}dz\\
   =&\dfrac{1}{2\pi i} \sum_{n=1}^{\infty} \int f(z) g_n(z) dm\\
   =& \int f(z) \dfrac{1}{2\pi i} \sum_{n=1}^{\infty} g_n(z) dm\\
    =& \int fgdm.
\end{align*}

\bigskip \noindent Thus $g$ represents $0$ on $R^p(X)$. Finally, because $g_n $ has support on $  A_n  $,

\begin{align*}
    \int \dfrac{\vert  g(z)\vert ^q }{\phi(\vert z\vert )^q} dm & = \left( \dfrac{1}{2\pi} \right)^q \int \phi(\vert z\vert )^{-q} \left \vert \sum_{n=1}^{\infty} g_n(z) \right\vert ^qdm\\
   &=\left( \dfrac{1}{2\pi} \right)^q \sum_{m=1}^{\infty} \int_{A_m} \phi(\vert z\vert )^{-q} \left \vert \sum_{n=1}^{\infty} g_n(z) \right\vert ^qdm\\
    &\leq  \sum_{m=1}^{\infty} \int_{A_m}\phi(\vert z\vert )^{-q} \vert g_m(z)\vert ^qdm\\
    &\leq C \sum_{m=1}^{\infty} \phi(2^{-m})^{-q} \int_{A_m}   \vert g_m(z)\vert ^q dm.
\end{align*}

\bigskip \noindent Hence

\begin{equation*}
\begin{aligned}
  \int \dfrac{\vert  g(z)\vert ^q }{\vert \phi(z)\vert ^q} dm &\leq C  \sum_{m=1}^{\infty}  \phi(2^{-m})^{-q}   2^{mq} \left[ \Gamma_q(A_{m-1} \setminus X) + \Gamma_q(A_m \setminus X) + \Gamma_q(A_{m+1} \setminus X) \right. \\ 
  & \left. + 4^{-m}\phi(2^{-m})^q \right]  < \infty.
  \end{aligned}
\end{equation*}
\end{proof}

\section{(A$'$) does not imply (C$'$)}

Finally, we show that (A$'$) does not imply (C$'$). In particular we will show the following.

\begin{theorem}
Let $\phi(r)$ be an admissible function with associated function $\psi(r) = \frac{r}{\phi(r)}$ such that $\phi(0^{+})=0$, and let $2 < p < \infty$ and  $q = \frac{p}{p-1}$. Then there is a compact set $X$ containing $0$ such that for each $\epsilon>0$ the set $\{y\in X: \vert f(y)-f(0)\vert \leq \epsilon \phi(\vert y\vert )\Vert f\Vert _{L^p(X)} \text{ for all } f \in R_0(X)\}$ has full area density at $0$, but

\begin{equation*}
    \sum_{n=1}^{\infty} \phi(2^{-n})^{-q} 2^{nq} \Gamma_q(A_n(0) \setminus X) = \infty.
\end{equation*}

\end{theorem}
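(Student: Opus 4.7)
The plan is to construct $X$ as a "Swiss cheese": the closed unit disk minus a lacunary sequence of small disks, all centered on the positive real axis. The angular localization of the holes ensures that the estimate in (A$'$) can fail only inside thin sectors of area-density zero, while the radii are sized to make the $q$-capacity series in (C$'$) diverge.

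Concretely, I would take $X = \overline{B(0,1)}\setminus\bigcup_k D_{n_k}$, where $D_{n_k} = B(z_{n_k},r_{n_k})$ with $z_{n_k}=\tfrac{3}{4}\cdot 2^{-n_k}$, and the indices $\{n_k\}$ are chosen so that $\phi(2^{-n_{k+1}})\leq\tfrac12\phi(2^{-n_k})$ for all $k$ (possible since $\phi$ is non-decreasing with $\phi(0^+)=0$). The radii are specified by $2^{n_kq}r_{n_k}^{2-q}=\phi(2^{-n_k})^q/k$. Since $\Gamma_q(D_{n_k})\gtrsim r_{n_k}^{2-q}$ for a disk, one gets $\sum_n\phi(2^{-n})^{-q}2^{nq}\Gamma_q(A_n(0)\setminus X)\gtrsim\sum_k 1/k=\infty$, so (C$'$) fails. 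Because $(2-2q)/(2-q)<0$ for $1<q<2$, one also checks $r_{n_k}\cdot 2^{n_k}\to 0$, so $\bigcup_k D_{n_k}$ has area-density zero at $0$; in particular it suffices to verify (A$'$) for $y\in X$.

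For (A$'$), fix $\epsilon>0$ and set $\delta_{k,\epsilon}=(k^{1/q}\epsilon)^{-1}$. The claim is that for $y\in X\cap A_{n_k}(0)$ whose angular distance $\theta$ from the positive real axis satisfies $\theta\geq\delta_{k,\epsilon}$, there exists $g_y\in L^q(X)$ representing $f\mapsto f(y)-f(0)$ on $R^p(X)$ with $\|g_y\|_{L^q}\leq\epsilon\phi(|y|)$. To construct $g_y$ I would mimic the proof of Theorem~1, but with the Cauchy kernel $h(z)=1/(z-y)-1/z=y/(z(z-y))$ in place of $1/z$. The residue identity
\[
f(y)-f(0) = -\sum_j \frac{1}{2\pi i}\int_{\partial K_{n_j}} f(z)h(z)\,dz,
\]
together with smooth cutoffs $P_{n_j}$ equal to $1$ on $D_{n_j}$ and supported in $B(z_{n_j},2r_{n_j})$ (which satisfy $\|\bar\partial P_{n_j}\|_{L^q}^q\lesssim r_{n_j}^{2-q}$), produces via Stokes' theorem the representer $g_y=-\tfrac{1}{\pi}h\sum_j\bar\partial P_{n_j}$, supported in $X$. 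The angular hypothesis on $y$ places $y$ outside every $\mathrm{supp}(P_{n_j})$, so $h$ is analytic on these supports. Estimating $|h|$ annulus by annulus gives $|h|\lesssim 2^{n_j}$ when $n_j>n_k$, $|h|\lesssim |y|\cdot 2^{2n_j}$ when $n_j<n_k$, and $|h|\lesssim 1/(|y|\theta)$ when $n_j=n_k$; summing with the help of the lacunary geometric decay of $\phi(2^{-n_j})^q$, one obtains
\[
\|g_y\|_{L^q}^q \lesssim \frac{\phi(|y|)^q}{k} + \frac{\phi(|y|)^q}{k\theta^q},
\]
which is at most $\epsilon^q\phi(|y|)^q$ once $\theta\geq\delta_{k,\epsilon}$. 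For $y$ lying in a non-hole annulus $A_n(0)$, $n\in(n_{k-1},n_k)$, the same calculation (now with no angular dependence) gives $\|g_y\|_{L^q}^q\lesssim\phi(|y|)^q/k$.

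The main obstacle is controlling the near term $j=k$: the pole of $h$ at $y$ is what forces the angular exclusion inside the hole annulus $A_{n_k}(0)$. The argument then closes with a density calculation. For $\rho\in(2^{-n_{k_0+1}},2^{-n_{k_0}}]$, the points $y\in B(0,\rho)\cap X$ for which the (A$'$) inequality fails are contained in the union of the excluded angular sectors $\{|\arg y|\leq\delta_{j,\epsilon}\}\cap A_{n_j}(0)$ for $j\geq k_0+1$, of combined area $\lesssim\sum_{j\geq k_0+1}\delta_{j,\epsilon}\cdot 2^{-2n_j}\lesssim\delta_{k_0+1,\epsilon}\cdot 2^{-2n_{k_0+1}}$ thanks to the geometric decay of $2^{-2n_j}$. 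Dividing by $m(B(0,\rho))\geq\pi\cdot 2^{-2n_{k_0+1}}$ yields a density $\lesssim\epsilon^{-1}(k_0+1)^{-1/q}$, which tends to $0$ as $\rho\to 0$. Hence the set on which the (A$'$) inequality holds has full area-density at $0$, completing the proof.
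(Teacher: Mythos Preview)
Your construction and overall strategy match the paper's: a Swiss cheese on the real axis with disk radii calibrated so that $\Gamma_q(D_{n_k})\sim \phi(2^{-n_k})^q/(k\,2^{n_kq})$, then a Cauchy--Green representation of $f(y)-f(0)$ via cutoffs $P_{n_j}$, with an exclusion set of density zero around the holes. The paper uses a distance-based exclusion (a collar of width $\sim k^{-1/(2q)}2^{-n_k}$ around $D_{n_k}$) where you use an angular one, and the paper appeals to Hedberg's Sobolev-type cutoff lemma where you use the elementary bump supported in $B(z_{n_k},2r_{n_k})$; these are cosmetic differences.

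There is, however, a genuine gap in your summation for the ``outer'' terms $j<k$. On $\mathrm{supp}\,P_{n_j}$ with $j<k$ you correctly have $|h|\lesssim |y|\,2^{2n_j}$, so the $j$th contribution to $\|g_y\|_{L^q}^q$ is
\[
|y|^q\,2^{2n_jq}\,r_{n_j}^{2-q}
= |y|^q\,2^{n_jq}\phi(2^{-n_j})^q/j
= |y|^q\,\psi(2^{-n_j})^{-q}/j .
\]
Because $\psi(0^+)=0$, the sequence $\psi(2^{-n_j})^{-q}$ is \emph{increasing} in $j$, and the geometric decay of $\phi(2^{-n_j})$ alone gives no control here: from $\phi(2^{-n_{j+1}})\le\tfrac12\phi(2^{-n_j})$ one only gets $\psi(2^{-n_{j+1}})^{-q}/\psi(2^{-n_j})^{-q}\ge 1$, not the doubling needed to make the partial sums comparable to the last term. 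Consequently your claimed bound $\sum_{j<k}(\cdots)\lesssim \phi(|y|)^q/k$ does not follow; in fact $\psi(2^{-n_k})^{q}\sum_{j<k}\psi(2^{-n_j})^{-q}/j$ can be as large as $\log k$ under your hypothesis. The paper closes exactly this hole by thinning the subsequence further so that $\sum_{j<k}\psi(a_j)^{-q}<\psi(a_k)^{-q}$, and then splitting the sum at an intermediate index to show it tends to $0$. Adding this second lacunarity condition to your choice of $\{n_k\}$ repairs the argument; without it the estimate for the far-past terms fails.
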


\bigskip

Due to the length of the proof, it will be split up into four lemmas. We first describe the construction of the set $X$. Again we abbreviate $A_n(0)$ to $A_n$. Let $a_n = \frac{3}{4}\cdot 2^{-n}$. We can choose a subsequence still denoted $a_n$  such that $\phi(a_{n}) < \frac{1}{2} \phi(a_{n-1})$ and 

\begin{equation*}
\sum_{n=1}^{j-1} \psi(a_n)^{-q} < \psi(a_j)^{-q}.
\end{equation*}

\bigskip

\noindent Let $r_n =[ n^{-1}a_n^q \phi(a_n)^q]^{\frac{1}{2-q}}$. Then there exists $M >0$ such that for $n \geq M$, $\frac{r_n}{a_n}<\frac{1}{5} n^{\frac{-1}{2q}}$. For such $n$ let $D_n$ denote the open disk centered at $a_n$ with radius $r_n$; otherwise, let $D_n$ be the empty set. Let $\Delta$ denote the closed unit disk and let $X = \Delta \setminus \cup_n D_n$.

\begin{lemma}
Let $X$ be the set constructed in the previous paragraph. Then 

\begin{equation*}
    \sum_{n=1}^{\infty} \phi(2^{-n})^{-q} 2^{nq} \Gamma_q(A_n \setminus X) = \infty.
\end{equation*}

\end{lemma}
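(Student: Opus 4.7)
The idea is simply to use the disks $D_n$ themselves to produce a lower bound on $\Gamma_q(A_n \setminus X)$ that is large enough to force divergence of the series.

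First I would clarify the indexing. The original sequence $a_n = \tfrac{3}{4}\cdot 2^{-n}$ has been thinned to a subsequence still denoted $a_n$, so each $a_n$ equals $\tfrac{3}{4}\cdot 2^{-k_n}$ for a strictly increasing sequence of integers $k_n$. The bound $r_n/a_n < \tfrac{1}{5} n^{-1/(2q)}$ from the construction places $D_n$ well inside the annulus $A_{k_n} = \{z : 2^{-(k_n+1)} < |z| < 2^{-k_n}\}$, and since $D_n \cap X = \emptyset$ by the definition of $X$, we have $D_n \subseteq A_{k_n} \setminus X$.

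Next I would invoke the scaling identity $\Gamma_q(B(a,r)) = c_q\, r^{2-q}$ for a planar ball, where $c_q > 0$ depends only on $q \in (1,2)$. This follows from the definition of $q$-capacity using translation invariance and the change of variables $z \mapsto a + rz$ in admissible test functions $u$, which shows $\int |\nabla u|^q\,dA$ scales as $r^{2-q}$. Monotonicity of $\Gamma_q$ under set inclusion then gives
\begin{equation*}
    \Gamma_q(A_{k_n} \setminus X) \;\geq\; \Gamma_q(D_n) \;\geq\; c_q\, r_n^{2-q} \;=\; c_q\, n^{-1} a_n^{q} \phi(a_n)^{q}.
\end{equation*}

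Then I would assemble the lower bound on the $k_n$-th summand. Because $2^{-k_n} = \tfrac{4}{3} a_n$, the monotonicity of $\phi$ together with the nondecreasing property of $\psi(r) = r/\phi(r)$ (which forces $\phi(\tfrac{4}{3}a_n) \leq \tfrac{4}{3}\phi(a_n)$) gives $\phi(2^{-k_n}) \asymp \phi(a_n)$; similarly $2^{k_n q} \asymp a_n^{-q}$. Substituting, the $k_n$-th term of the series is bounded below by
\begin{equation*}
    \phi(2^{-k_n})^{-q} \cdot 2^{k_n q} \cdot \Gamma_q(A_{k_n}\setminus X) \;\geq\; C\, \phi(a_n)^{-q} a_n^{-q} \cdot n^{-1} a_n^{q} \phi(a_n)^{q} \;=\; \frac{C}{n}.
\end{equation*}
Since the original series has nonnegative terms and contains (for $n \geq M$) a subsequence of terms at least $C/n$, it diverges by comparison with the harmonic series.

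The only substantive step is the capacity lower bound $\Gamma_q(D_n) \geq c_q r_n^{2-q}$, which is handled by the standard scaling property of $\Gamma_q$; everything else is bookkeeping with the subsequence and the admissibility of $\phi$ to control $\phi(2^{-k_n})$ by $\phi(a_n)$.
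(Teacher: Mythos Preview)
Your proof is correct and follows essentially the same route as the paper: identify $D_n$ inside the annulus containing $a_n$, use $\Gamma_q(D_n)\asymp r_n^{2-q}$ together with the defining relation $r_n^{2-q}=n^{-1}a_n^{q}\phi(a_n)^{q}$, and observe that the corresponding summand reduces to a constant times $1/n$, forcing divergence by comparison with the harmonic series. Your explicit handling of the subsequence via $a_n=\tfrac{3}{4}\cdot 2^{-k_n}$ is in fact more careful than the paper's own argument, which silently conflates the subsequence index with the annulus index; otherwise the two proofs are the same computation.
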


\begin{proof}

Since the $q$-capacity of a disk of radius $r$ is $r^{2-q}$\cite{Adams}, 

\begin{align*}
    \sum_{n=1}^{\infty} \phi(2^{-n})^{-q} 2^{nq} \Gamma_q(A_n \setminus X) &= \sum_{n=M}^{\infty} \phi(2^{-n})^{-q} 2^{nq} \Gamma_q(A_n \setminus X)\\
    &= \sum_{n=M}^{\infty} \phi(2^{-n})^{-q} 2^{nq} n^{-1} a_{n}^q \phi(a_n)^q\\
    &= \sum_{n=M}^{\infty} \phi(2^{-n})^{-q} 2^{nq} n^{-1} \left(\frac{3}{4}\right)^q  2^{-nq} \phi \left(\frac{3}{4} \cdot 2^{-n} \right)^q\\
    &= \sum_{n=M}^{\infty} \phi(2^{-n})^{-q}  n^{-1} \left(\frac{3}{4}\right)^q  \phi \left(\frac{3}{4} \cdot 2^{-n} \right)^q.
\end{align*}

\bigskip \noindent Then since $\phi(r) = \dfrac{r}{\psi(r)}$ and $\psi(r)$ is non-decreasing

\begin{align*}
     \sum_{n=M}^{\infty} \phi(2^{-n})^{-q}  n^{-1} \left(\frac{3}{4}\right)^q  \phi \left(\frac{3}{4} \cdot 2^{-n} \right)^q
    &= \sum_{n=M}^{\infty} \left(\dfrac{2^{-n}}{\psi(2^{-n})}\right)^{-q} n^{-1} \left(\dfrac{3}{4}\right)^q  \left(\dfrac{\frac{3}{4}\cdot 2^{-n}}{\psi(\frac{3}{4}\cdot 2^{-n})}\right)^q\\
    &= \sum_{n=M}^{\infty}\left( \frac{3}{4}\right)^{2q} \psi(2^{-n})^q n^{-1} \psi \left(\frac{3}{4} \cdot 2^{-n} \right)^{-q}\\
    &\geq \left( \frac{3}{4}\right)^{2q} \sum_{n=M}^{\infty} n^{-1}  = \infty.
\end{align*}

\bigskip 

\noindent Hence \begin{equation*}
    \sum_{n=1}^{\infty} \phi(2^{-n})^{-q} 2^{nq} \Gamma_q(A_n \setminus X) = \infty.
\end{equation*}
\end{proof}

\begin{figure}[ht!]
\begin{center}
\includegraphics[scale = 0.6]{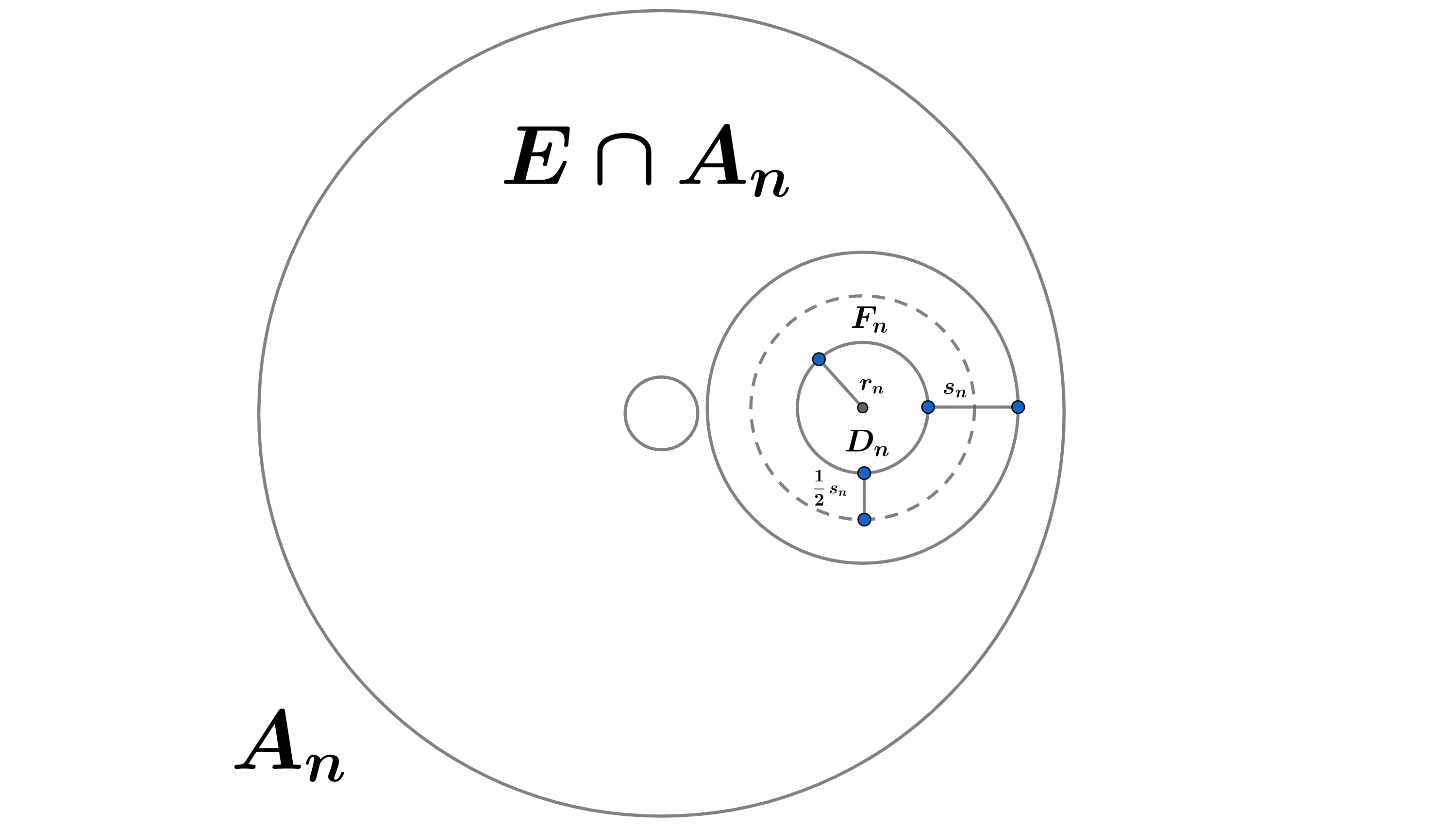}
\caption{The sets $D_n$, $E \cap A_n,$ and $F_n$}
\label{fig1}
\end{center}
\end{figure}

\begin{lemma}
Let $d_n(z)$ denote the distance from $z$ to $D_n$ and let $s_n = \frac{1}{5}n^{-\frac{1}{2q}} a_n$. Let $E = \bigcup_{n>M} \{z \in A_n: d_n(z) \geq r_n\}$. (See Figure 1). Then $E$ has full area density at $0$.

\end{lemma}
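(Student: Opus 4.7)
The plan is to estimate $m(B(0,r)\setminus E)$ directly and show it is a vanishing fraction of $m(B(0,r))=\pi r^2$ as $r\to 0$. The intuition is that the only way a point of $A_n$ (for $n>M$) can fail to lie in $E$ is to be within distance $r_n$ of $D_n$, and the constraint $r_n/a_n < \tfrac{1}{5}n^{-1/(2q)}$ forces this "bad" region to be much smaller than $A_n$, with an extra polynomial decay in $n$.

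First I would fix a small $r>0$, choose $N$ with $2^{-(N+1)}< r \leq 2^{-N}$, and note that up to a set of measure zero $B(0,r)\subseteq \bigcup_{n\geq N}A_n$. Assuming $r<2^{-(M+1)}$ (which forces $N>M$, so that $D_n$ is nonempty for every $n\geq N$), I would observe that
\begin{equation*}
A_n\setminus E \;\subseteq\; \{z : d_n(z)<r_n\} \;=\; \{z : |z-a_n|<2r_n\},
\end{equation*}
a disk of area $4\pi r_n^2$. Summing over $n\geq N$,
\begin{equation*}
m\bigl(B(0,r)\setminus E\bigr) \;\leq\; 4\pi\sum_{n\geq N} r_n^2 .
\end{equation*}

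Next I would plug in the bound $r_n \leq \tfrac{1}{5}n^{-1/(2q)}a_n$ together with $a_n=\tfrac{3}{4}2^{-n}$ to get $r_n^2 \leq C\, n^{-1/q}\,4^{-n}$. Factoring out the slowest factor $N^{-1/q}$ and summing the geometric tail,
\begin{equation*}
\sum_{n\geq N} r_n^2 \;\leq\; C\, N^{-1/q}\sum_{n\geq N}4^{-n} \;\leq\; C'\, N^{-1/q}\, 4^{-N}.
\end{equation*}
Since $m(B(0,r))=\pi r^2\geq \pi\cdot 4^{-(N+1)}$, dividing gives
\begin{equation*}
\frac{m(B(0,r)\setminus E)}{m(B(0,r))}\;\leq\; C''\, N^{-1/q},
\end{equation*}
which tends to $0$ as $r\to 0$, proving full area density at $0$.

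The estimate is almost mechanical once one has set up the right inclusion; the only subtlety to confirm is the inclusion $A_n\setminus E\subseteq\{|z-a_n|<2r_n\}$, which uses that $D_n$ is a disk of radius $r_n$ centered at $a_n$, so the ``$r_n$-neighborhood of $D_n$'' is just the concentric disk of radius $2r_n$. The constraint $\tfrac{r_n}{a_n}<\tfrac{1}{5}n^{-1/(2q)}$ is doing double duty here: it both guarantees that the $D_n$ are disjoint and contained in $A_n$ (so the construction makes sense) and, more importantly, it provides the polynomial gain $n^{-1/q}$ that makes the resulting density ratio vanish rather than merely staying bounded.
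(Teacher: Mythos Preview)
Your proof is correct and follows essentially the same approach as the paper: bound $A_n\setminus E$ by a small disk about $a_n$, sum the areas over $n\ge N$, and compare with $m(B(0,r))$ to obtain a ratio of order $N^{-1/q}$. The only cosmetic differences are that the paper works with dyadic radii $r=2^{-j}$ and bounds the bad set by the slightly larger disk $B(a_n,r_n+s_n)$ (using $r_n<s_n$) rather than your tighter $B(a_n,2r_n)$; both choices lead to the same $j^{-1/q}$ decay.
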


\begin{proof}
 Since $\frac{r_n}{a_n}<\frac{1}{5} n^{\frac{-1}{2q}}$, it follows that $r_n < s_n$. Let $B(a,r)$ denote the disk centered at $a$ with radius $r$. Then it follows from the definition of $E$ that $B(0,2^{-j}) \setminus E$ is the union of the disks $B(a_n, r_n+s_n)$, where the union is taken over $n \geq j$. Thus

\begin{equation*}
    \dfrac{m(B(0,2^{-j})) \setminus E)}{m(B(0,2^{-j}))} = \dfrac{\pi \sum_{n=j}^{\infty} (r_n+s_n)^2}{2^{-2j}} = \dfrac{ \sum_{n=j}^{\infty} (r_n+s_n)^2}{ \frac{16}{9} a_j^2}.
\end{equation*}

\bigskip \noindent However, since $r_{n+1} < \frac{1}{2} r_n$ and $s_{n+1} < \frac{1}{2} s_n$,

\begin{align*}
    \dfrac{m(B(0,2^{-j}) \setminus E)}{m(B(0,2^{-j}))} &\leq \dfrac{\left(\sum_{n=0}^{\infty}2^{-2n}\right)(r_j+s_j)^2}{ \frac{16}{9} a_j^2}\\
    &< \frac{3}{4} \left(\dfrac{2s_j}{a_j} \right)^2\\
    &= \dfrac{3}{25} j^{-\frac{1}{q}}.
\end{align*}

\bigskip

\noindent Thus $\dfrac{m(B(0,2^{-j}) \setminus E)}{m(B(0,2^{-j}))} \to 0$ as $j \to \infty$ and $E$ has full area density at $0$.

\end{proof}

\begin{lemma}
\label{lemma3}
There exist smooth functions $P_n$ such that $P_n(z) = 1$ on $D_n$, $\int \vert  \nabla P_n\vert ^q dm \leq C (\Gamma_q(D_n)+16^{-n})$ and $P_n$ is supported on the set $F_n = \{z \in A_n: d_n(z) \leq \frac{s_n}{2}\}$.
\end{lemma}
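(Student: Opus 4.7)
The plan is to construct $P_n$ as a smoothed truncated radial $q$-capacitary potential centered at $a_n$. Let $\alpha := (q-2)/(q-1)$; since $1 < q < 2$, $\alpha < 0$. Set $R_n := r_n + s_n/2$. For $n \ge M$, the hypothesis $r_n < s_n$ forces $r_n/R_n < 2/3$, and the routine estimate $R_n < \tfrac{3}{2}s_n \le \tfrac{3}{10}n^{-1/(2q)}a_n < a_n/3$ shows that $\overline{B(a_n, R_n)}$ is contained both in $A_n$ and in the $(s_n/2)$-neighborhood of $D_n$, hence in $F_n$. Accordingly, I would first introduce the Lipschitz model
\[
Q_n(z) = \begin{cases} 1, & |z - a_n| \le r_n, \\[2pt] \dfrac{|z - a_n|^\alpha - R_n^\alpha}{r_n^\alpha - R_n^\alpha}, & r_n < |z - a_n| < R_n, \\[2pt] 0, & |z - a_n| \ge R_n, \end{cases}
\]
which on the middle annulus is the radial minimizer of the $q$-Dirichlet integral with boundary values $1$ and $0$; by construction $Q_n \equiv 1$ on $D_n$ and $\operatorname{supp} Q_n \subset F_n$.

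Next I would compute $\int |\nabla Q_n|^q\, dA$ directly in polar coordinates. Substituting $Q_n'(\rho) = \alpha \rho^{\alpha-1}/(r_n^\alpha - R_n^\alpha)$ and using the identity $q(\alpha - 1) + 1 = -1/(q-1)$ yields
\[
\int |\nabla Q_n|^q\, dA = \frac{2\pi |\alpha|^{q-1}}{(r_n^\alpha - R_n^\alpha)^{q-1}}.
\]
Since $r_n/R_n < 2/3$ and $\alpha < 0$, one has $r_n^\alpha - R_n^\alpha \ge \bigl(1 - (2/3)^{-\alpha}\bigr)\,r_n^\alpha$, and therefore $\int |\nabla Q_n|^q\, dA \le C_q\,r_n^{2-q} = C_q\,\Gamma_q(D_n)$.

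To pass from $Q_n$ to a smooth $P_n$, I would mollify. Enlarge the core $\{Q_n = 1\}$ to $B(a_n, r_n + \delta_n)$ and shrink the support to $B(a_n, R_n - \delta_n)$ by the obvious linear reparametrization of the middle piece, then convolve with a standard mollifier of radius $\delta_n$. Provided $\delta_n$ is small enough that the enlarged support still fits inside $F_n$, the resulting $P_n$ is smooth, equals $1$ on $D_n$, and is supported in $F_n$; by $W^{1,q}$-continuity of convolution one may further shrink $\delta_n$ so that $\int |\nabla P_n|^q\, dA \le \int |\nabla Q_n|^q\, dA + 16^{-n}$. Combining this with the previous paragraph gives the desired bound $\int |\nabla P_n|^q\, dA \le C(\Gamma_q(D_n) + 16^{-n})$.

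The only real technicality is the bookkeeping in this mollification step, where four conditions (smoothness, $P_n \equiv 1$ on $D_n$, $\operatorname{supp} P_n \subset F_n$, energy within $16^{-n}$ of that of $Q_n$) must all be met simultaneously. Each is mild in isolation, and all four are ensured by taking $\delta_n$ sufficiently small after the pre-mollification adjustment, so this presents no substantive obstacle.
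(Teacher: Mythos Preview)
Your proof is correct but takes a genuinely different route from the paper's. The paper follows Hedberg's general construction: it takes an arbitrary near-extremal test function $\omega_n$ for $\Gamma_q(D_n)$ (so $\int|\nabla\omega_n|^q\,dA\le\Gamma_q(D_n)+16^{-n}$), multiplies by a Lipschitz cutoff $\xi_n$ supported in $F_n$ with $|\nabla\xi_n|\le 2s_n^{-1}$, sets $P_n=\xi_n\omega_n$, and bounds the cross term $\int\omega_n^q|\nabla\xi_n|^q\,dA$ via the Gagliardo--Nirenberg--Sobolev inequality combined with $m(F_n)\le C s_n^2$. That argument works for any compact set in place of $D_n$. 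You instead exploit the fact that $D_n$ is a disk: you write down the explicit radial $q$-capacitary potential of the annulus $B(a_n,R_n)\setminus\overline{D_n}$, compute its energy in closed form, and use the uniform ratio bound $r_n/R_n<2/3$ to compare the result with $r_n^{2-q}=\Gamma_q(D_n)$. This is more elementary---no Sobolev embedding is needed and the constants are explicit---at the cost of being specific to the round geometry; for the purposes of Lemma~\ref{lemma4} either approach is perfectly adequate. (One cosmetic remark: in your mollification step the convolution itself does not increase the $L^q$-norm of the gradient, by Young's inequality; the $16^{-n}$ slack is really absorbing the effect of the preliminary radius adjustment $r_n\mapsto r_n+\delta_n$, $R_n\mapsto R_n-\delta_n$.)
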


\begin{proof}

  The proof is a modification of a construction of Hedberg \cite[pg. 277]{Hedberg}. It follows from the definition of $q$-capacity that there exists smooth functions $\omega_n$ such that $\omega_n=1$ on $D_n$ and $\int \vert \nabla \omega_n\vert ^q dm \leq \Gamma_q(D_n) + 16^{-n}$. Let $\xi_n(z)$ be a piecewise linear continuous function of $\vert z\vert $ such that $\xi_n = 1$ on $D_n$, $\xi_n = 0$ outside $F_n$ and $\vert \nabla \xi_n(z)\vert  \leq 2s_n^{-1}$, and let $P_n(z) = \xi_n(z)\omega_n(z)$. Then

\begin{equation*}
    \int \vert \nabla P_n(z)\vert ^q dm \leq  \left( \int \xi_n^q \vert \nabla \omega_n\vert ^q dm + \int \omega_n^q \vert \nabla \xi_n\vert ^q dm \right).
\end{equation*}

\bigskip \noindent The first integral is bounded above by $ \Gamma_q(D_n) + 16^{-n}$. To bound the second integral we first note that it follows from the Gagliardo-Nirenberg-Sobolev inequality (See \cite[pg.277]{Evans}) that $\left(\int \omega_n^{\frac{2q}{2-q}} dm\right)^{\frac{2-q}{2}} \leq C \int \vert \nabla \omega_n\vert ^q dm$. Thus by Holder's inequality

\begin{align*}
    \int \omega_n^q \vert \nabla \xi_n\vert ^q dm &\leq C \left( \int \omega_n^{\frac{2q}{2-q}} dm\right)^{\frac{2-q}{2}} \left( \int_{F_n} \vert \nabla \xi_n\vert ^2 \right)^{\frac{q}{2}}\\
    &\leq C \int \vert \nabla \omega_n\vert ^q dm \cdot m( F_n)^{\frac{q}{2}} s_n^{-q}\\
    &\leq C (\Gamma_q(D_n) + 16^{-n}) \cdot \left (r_n+ \frac{s_n}{2} \right)^q s_n^{-q}\\
    &\leq C (\Gamma_q(D_n) + 16^{-n}) \cdot \left (s_n+ \frac{s_n}{2} \right)^q s_n^{-q}\\
    &= C (\Gamma_q(D_n) + 16^{-n}) \cdot \left (\frac{3}{2} \right)^q.
\end{align*}

\bigskip \noindent Thus $\int \vert \nabla P_n\vert ^q dm \leq C( \Gamma_q(D_n) + 16^{-n})$. 

\end{proof}

\begin{lemma}
\label{lemma4}
Let $\epsilon >0$. Then for $N$ sufficiently large, 

\begin{equation*}
    \vert f(y) - f(0)\vert  \leq \epsilon \phi(\vert y\vert ) \Vert f\Vert _p
\end{equation*}

\bigskip
\noindent whenever $y \in A_N \cap X^{\circ} \cap E$ and $f \in R_0(X)$.

\end{lemma}

\begin{proof}

Let $f \in R_0(X)$ and $y \in A_N \cap X^{\circ} \cap E$. Then it follows from the Cauchy integral formula that 

\begin{align*}
    \vert f(y) -f(0)\vert  &= \left\vert  \frac{1}{2\pi i} \sum_{n=1}^{\infty} \int_{\partial D_n} \dfrac{f(z)dz}{z-y} - \frac{1}{2\pi i} \sum_{n=1}^{\infty} \int_{\partial D_n} \dfrac{f(z)dz}{z} \right\vert \\
    &= \left\vert  \frac{1}{2\pi i} \sum_{n=1}^{\infty} \int_{\partial D_n} \dfrac{y f(z) dz}{z(z-y)} \right\vert \\
 &\leq  \vert y\vert  \left\vert   \sum_{n=1}^{\infty} \int_{\partial D_n} \dfrac{ f(z)  dz}{z(z-y)} \right\vert .
\end{align*}

\bigskip

\noindent Let $P_n$ be the smooth functions constructed in Lemma \ref{lemma3} and let $P = \sup_n P_n$. Then $P(z) = 1$ on $\cup D_n$ and hence by Green's theorem,

\begin{align*}
    \vert f(y) -f(0)\vert  &\leq  \vert y\vert  \left\vert   \sum_{n=1}^{\infty} \int_{\partial D_n} \dfrac{ f(z) P(z) dz}{z(z-y)} \right\vert \\
    &=  \vert y\vert  \left\vert   \sum_{n=1}^{\infty} \int_{F_n \setminus D_n} \dfrac{ f(z) \frac{\partial P}{\partial \overline{z}} dm}{z(z-y)} \right\vert \\
    &\leq   \vert y\vert   \sum_{n=1}^{\infty} \int_{F_n \setminus D_n} \left(a_n-r_n-\frac{s_n}{2} \right)^{-1} \delta_n(y)^{-1} \vert f(z)\vert  (\vert \nabla P(z)\vert ) dm. 
\end{align*}

\bigskip \noindent where $\delta_n(y)$ is the distance from $y$ to $F_n$. Since $\frac{r_n}{a_n} < \frac{1}{5} n^{\frac{-1}{2q}}$,

\begin{equation*}
\left(a_n-r_n-\frac{s_n}{2}\right) \geq \left(1-\frac{1}{5}n^{\frac{-1}{2q}}-\frac{1}{10}n^{\frac{-1}{2q}}\right)a_n \geq \frac{7}{10} a_n,
\end{equation*}

\bigskip \noindent and hence 

\begin{equation*}
    \vert f(y) -f(0)\vert  \leq  C \vert y\vert   \sum_{n=1}^{\infty} \int_{F_n \setminus D_n} a_n^{-1} \delta_n(y)^{-1} \vert f(z)\vert  (\vert \nabla P(z)\vert ) dm.
\end{equation*}

\bigskip

\noindent Next, it follows from Holder's inequality that 

\begin{align*}
    \vert f(y) -f(0)\vert  
    &\leq C\vert y\vert  \left( \sum_{n=1}^{\infty} \int_{F_n \setminus D_n} \vert f(z)\vert ^p dm \right)^{\frac{1}{p}} \left( \sum_{n=1}^{\infty} a_n^{-q} \delta_n(y)^{-q} \int_{F_n \setminus D_n}\vert \nabla P(z)\vert ^q dm\right)^{\frac{1}{q}}\\
    &\leq C\vert y\vert  \cdot \Vert f\Vert _p \left( \sum_{n=1}^{\infty} a_n^{-q} \delta_n(y)^{-q} ( \Gamma_q(D_n) + 16^{-n})\right)^{\frac{1}{q}}\\
    &\leq C\vert y\vert  \cdot \Vert f\Vert _p \left( \sum_{n=1}^{\infty} a_n^{-q} \delta_n(y)^{-q} (r_n^{(2-q)}+16^{-n})\right)^{\frac{1}{q}}.
\end{align*}

\bigskip We next obtain bounds for $\delta_n(y)$. Recall that $\frac{r_n}{a_n} < \frac{1}{5} n^{\frac{-1}{2q}}$ and $y \in A_N$. If $n < N$, then 

\begin{equation*}
    \delta_n(y) \geq \left(a_n - r_n-\frac{s_n}{2}\right) -2^{-N}\geq \frac{7}{10}a_n -2^{-N} = \frac{21}{40} 2^{-n} - 2^{-N} \geq \frac{1}{40} 2^{-n} = \frac{1}{30} a_n.
\end{equation*}

\bigskip \noindent If $n>N$, then

\begin{align*}
    \delta_n(y) &\geq 2^{-(N+1)} - \left(a_n + r_n+ \frac{s_n}{2}\right) \geq 2^{-(N+1)}-\left(1+\frac{3}{10}n^{\frac{-1}{2q}}\right)a_n\\
    &\geq 2^{-(N+1)} - \frac{39}{40} 2^{-n} \geq \frac{1}{40} 2^{-(N+1)} = \frac{1}{60} a_N.
\end{align*}

\bigskip 

\noindent Lastly, if $n=N$, then $\delta_N(y) \geq \frac{s_N}{2}$. Now let $t_n = r_n^{(2-q)}+16^{-n}$. Then

\begin{equation*}
 \begin{aligned}
    \vert f(y) - f(0)\vert  &\leq C\vert y\vert  \cdot \Vert f\Vert _p \left( \sum_{n=1}^{\infty} a_n^{-q} \delta_n(y)^{-q} t_n\right)^{\frac{1}{q}}\\
    &\leq C\vert y\vert  \cdot \Vert f\Vert _p \left( \sum_{n=1}^{N-1} a_n^{-q} \delta_n(y)^{-q} t_n + a_N^{-q} \delta_N(y)^{-q} t_N + \sum_{n=N+1}^{\infty} a_n^{-q} \delta_n(y)^{-q} t_n\right)^{\frac{1}{q}}\\
    &\leq C\vert y\vert  \cdot \Vert f\Vert _p \left( \sum_{n=1}^{N-1} a_n^{-2q}  t_n + a_N^{-q} s_N^{-q} t_N + \sum_{n=N+1}^{\infty} a_n^{-q} a_{N}^{-q} t_n\right)^{\frac{1}{q}}\\
        &\leq C \phi(\vert y\vert ) \cdot \Vert f\Vert _p \left( \sum_{n=1}^{N-1} a_n^{-2q}  t_n \psi(a_N)^{q} + \phi(a_N)^{-q}  s_N^{-q} t_N \right. \\
        &\left. + \sum_{n=N+1}^{\infty} a_n^{-q} \phi(a_N)^{-q}  t_n\right)^{\frac{1}{q}}.
\end{aligned}
\end{equation*}

\bigskip \noindent The last line follows because $\vert y\vert  \leq \frac{4}{3} \phi(\vert y\vert ) \psi(a_N)$. To complete the proof, we must bound the 3 terms inside the parentheses. We begin by bounding the leftmost term

\begin{align*}
   \sum_{n=1}^{N-1} a_n^{-2q}  t_n \psi(a_N)^{q} &= \sum_{n=1}^{N-1} a_n^{-2q}  r_n^{(2-q)} \psi(a_N)^{q}+ \sum_{n=1}^{N-1} a_n^{-2q} 16^{-n} \psi(a_N)^{q}.
\end{align*}

\bigskip 

\noindent We will need to bound both sums. The first sum simplifies thus.

\begin{align*}
    \sum_{n=1}^{N-1} a_n^{-2q}  r_n^{(2-q)} \psi(a_N)^{q}
   &= \psi(a_N)^{q}\sum_{n=1}^{N-1} a_{n}^{-q}  n^{-1} \phi(a_{n})^q\\
    &= \psi(a_N)^{q}\sum_{n=1}^{N-1}   n^{-1} \psi(a_{n})^{-q}.
\end{align*}

\bigskip

\noindent Now we make use of the fact that $\displaystyle \sum_{n=1}^{j-1} \psi(a_n)^{-q} < \psi(a_j)^{-q}$ and $\displaystyle \sum_{n=j}^{N-1} \psi(a_n)^{-q} < \psi(a_N)^{-q}$. Thus 

\begin{align*}
   \psi(a_N)^{q}\sum_{n=1}^{N-1}   n^{-1} \psi(a_{n})^{-q} 
    &= \psi(a_N)^{q} \sum_{n=1}^{j-1}   n^{-1} \psi(a_{n})^{-q} + \psi(a_N)^{q}\sum_{n=j}^{N-1}   n^{-1} \psi(a_{n})^{-q}\\
    &\leq \psi(a_N)^{q} \sum_{n=1}^{j-1}    \psi(a_{n})^{-q} + j^{-1} \psi(a_N)^{q}\sum_{n=j}^{N-1}   \psi(a_{n})^{-q}\\
    &\leq \psi(a_N)^{q} \psi(a_{j})^{-q}+ j^{-1}.
\end{align*}

\bigskip

\noindent To get a bound for the second sum, we note that $a_n^{-2q} = \left(\frac{3}{4}\right)^{-2q} 2^{2qn}$. Hence

\begin{align*}
    \sum_{n=1}^{N-1} a_n^{-2q} 16^{-n} \psi(a_N)^{q}
   &\leq \left(\frac{3}{4}\right)^{-2q}\psi(a_N)^{q}\sum_{n=1}^{\infty} 2^{(2q-4)n}\\
    &= \left(\frac{3}{4}\right)^{-2q}\psi(a_N)^{q}\dfrac{2^{(2q-4)}}{1-2^{(2q-4)}}.
\end{align*}

\bigskip

\noindent Thus we have the following bound for the leftmost term.

\begin{align*}
   \sum_{n=1}^{N-1} a_n^{-2q}  t_n \psi(a_N)^{q} \leq \psi(a_N)^{q} \psi(a_{j})^{-q}+ j^{-1}  + \left(\frac{3}{4}\right)^{-2q}\psi(a_N)^{q}\dfrac{2^{(2q-4)}}{1-2^{(2q-4)}}.
\end{align*}

\bigskip 

\noindent Then by choosing $j$ sufficiently large, it follows that

\begin{equation*}
    \sum_{n=1}^{N-1} a_n^{-2q}  t_n^{(2-q)} \psi(a_N)^{q} \to 0
\end{equation*}

\bigskip

\noindent as $N \to \infty$. We next bound the middle term.

\begin{align*}
    \phi(a_N)^{-q}s_N^{-q}t_N &=\phi(a_N)^{-q}s_N^{-q}(r_N^{2-q}+ 16^{-N})\\
    &= \phi(a_N)^{-q} 5^q N^{\frac{1}{2}} a_N^{-q} (N^{-1} a_N^q \phi(a_N)^q+ 16^{-N})\\
    &= 5^q N^{-\frac{1}{2}} + 5^q \phi(a_N)^{-q} N^{\frac{1}{2}} a_N^{-q} 16^{-N}.
\end{align*}

\bigskip Since $\phi(r) = \dfrac{r}{\psi(r)}$,

\begin{align*}
     5^q\phi(a_N)^{-q} N^{\frac{1}{2}} a_N^{-q} 16^{-N}
    &= 5^q a_N^{-2q} \psi(a_N)^q N^{\frac{1}{2}} 16^{-N} \\
    &= \left(\frac{3}{4\sqrt{5}} \right)^{-2q} \psi(a_N)^q N^{\frac{1}{2}} 2^{(2q-4)N}.
\end{align*}

\bigskip \noindent Thus 

\begin{equation*}
    \phi(a_N)^{-q}s_N^{-q}t_N^{2-q} = 5^q N^{-\frac{1}{2}} + \left(\frac{3}{4\sqrt{5}} \right)^{-2q} \psi(a_N)^q N^{\frac{1}{2}} 2^{(2q-4)N} \to 0
\end{equation*}

\bigskip

\noindent as $N \to \infty$. Finally, we bound the rightmost term.

\begin{align*}
    \sum_{n=N+1}^{\infty} a_n^{-q} \phi(a_N)^{-q}  t_n^{(2-q)} &=
    \sum_{n=N+1}^{\infty} a_n^{-q} \phi(a_N)^{-q}  r_n^{(2-q)} + \sum_{n=N+1}^{\infty} a_n^{-q} \phi(a_N)^{-q}  16^{-n}
\end{align*}

\bigskip

\noindent We will need to bound both sums. To bound the first sum we make use of the property that $\phi(a_n) < \frac{1}{2} \phi(a_{n-1})$.

\begin{align*}
    \sum_{n=N+1}^{\infty} a_n^{-q} \phi(a_N)^{-q}  r_n^{(2-q)}
    &= \phi(a_N)^{-q} \sum_{n=N+1}^{\infty} a_{n}^{-q}   n^{-1} a_{n}^q \phi(a_{n})^q\\
&< \phi(a_N)^{-q} (N+1)^{-1} \sum_{n=N+1}^{\infty}  \phi(a_{n})^q\\
&< \phi(a_N)^{-q} (N+1)^{-1} \sum_{n=1}^{\infty}  \phi(a_N)^q 2^{-nq}\\
&=  (N+1)^{-1} \dfrac{1}{2^q-1}.
\end{align*}

\bigskip

\noindent The second sum of the rightmost term is bounded in a similar way to the second sum of the leftmost term.

\begin{align*}
    \sum_{n=N+1}^{\infty} a_n^{-q} \phi(a_N)^{-q}  16^{-n}
    &=  \sum_{n=N+1}^{\infty} a_n^{-q}a_N^{-q}\psi(a_N)^q 16^{-n}\\
&\leq \psi(a_N)^q \sum_{n=N+1}^{\infty} a_n^{-2q} 16^{-n}\\
&\leq  \left( \frac{3}{4}\right)^{-2q} \psi(a_N)^q \sum_{n=1}^{\infty} 2^{(2q-4)n}\\
&=  \left( \frac{3}{4}\right)^{-2q} \psi(a_N)^q \dfrac{2^{(2q-4)}}{1-2^{(2q-4)}}.
\end{align*}

\bigskip

\noindent Therefore, 

\begin{equation*}
    \sum_{n=N+1}^{\infty} a_n^{-q} \phi(a_N)^{-q}  t_n^{(2-q)} \leq (N+1)^{-1} \dfrac{1}{2^q-1} + \left( \frac{3}{4}\right)^{-2q} \psi(a_N)^q \dfrac{2^{(2q-4)}}{1-2^{(2q-4)}}
\end{equation*}

\bigskip \noindent and hence

\begin{equation*}
    \sum_{n=N+1}^{\infty} a_n^{-q} \phi(a_N)^{-q}  t_n^{(2-q)} \to 0 
\end{equation*}

\bigskip \noindent as $N\to \infty$. Thus all three sums are bounded and tend to $0$ as $N \to \infty$. Hence

\begin{equation*}
    \vert f(y) - f(0)\vert  \leq   \epsilon \phi(\vert y\vert ) \Vert f\Vert _p
\end{equation*}

\bigskip \noindent for $y \in A_N \cap X^{\circ} \cap E$, provided $N$ is chosen sufficiently large.

\end{proof}

\end{document}